\providecommand{\U}[1]{\protect\rule{.1in}{.1in}}
\numberwithin{equation}{section}
\newtheorem{theorem}{Theorem}[section]
\newtheorem{lemma}[theorem]{Lemma}
\newenvironment{proof}[1][Proof]{\noindent\textbf{#1.} }{\ \rule{0.5em}{0.5em}}
\begin{document}

\title{Sch\"{u}tt's theorem for vector-valued sequence spaces}
\author{D. E. Edmunds
\and Yu. Netrusov}
\date{ Accepted for Journal of Approximation Theory}
\maketitle

\begin{abstract}
The entropy numbers of certain finite-dimensional operators acting between
vector-valued sequence spaces are estimated, thus providing a generalisation
of the famous result of Sch\"{u}tt. In addition, two-sided estimates of the
entropy numbers of some diagonal operators are obtained.

\end{abstract}

\section{\bigskip Introduction}

The notion of the entropy of a set and the companion idea of the entropy
numbers of a bounded linear map between (quasi-) Banach spaces are now of
proven importance in analysis, especially in spectral theory and approximation
theory: one has only to think of the ground-breaking work of Kolmogorov and
Tikhomirov \cite{KT}, the subsequent study related to Hilbert's thirteenth
problem by Vitushkin and Henkin \cite{VH}, and Birman and Solomyak's
celebrated paper \cite{BiS} on embeddings of Sobolev spaces to have an idea of
the possibilities. The theorem of Sch\"{u}tt mentioned in the title relates to
the natural embedding $id$ of $l_{p}^{m}$ in $l_{q}^{m},$ where $n\in
\mathbb{N}$ and $1\leq p<q\leq\infty:$ it asserts that given any
$k\in\mathbb{N}$, there are positive constants $c_{1},c_{2},$ independent of
$m$ and $n,$ such that the $n^{th}$ entropy number $e_{n}(T)$ of $T$ satisfies
$c_{1}A(m,n)\leq e_{n}(id)\leq c_{2}A(m,n),$ where $A(m,n)$ is an explicit
function of $m$ and $n$ (see Theorem 2.1 below). \ This was proved in
\cite{Sch} by means of volume arguments. Now it is known that the result holds
whenever $0<p<q\leq\infty:$ the upper estimate was obtained in \cite{ET},
Proposition 3.2.2, again by volume arguments, while for the lower estimate we
refer to \cite{EN}, Theorem 2 and \cite{K}. Apart from its intrinsic interest,
a good deal of the importance of Sch\"{u}tt's theorem stems from its
connection with embeddings of function spaces. In the work of Birman and
Solomyak alluded to above, estimates of the entropy numbers of embeddings
between Sobolev spaces were obtained by means of piecewise polynomial
approximations. To deal with more general spaces, such as those of Besov
(perhaps with generalised smoothness) or Lizorkin-Triebel type, it is more
convenient to use decompositions of wavelet (see, for example, \cite{Dau},
\cite{Mey}, \cite{Tri2}) or atomic (see \cite{HN}) form to reduce questions of
embeddings of function spaces to considerations of mappings between sequence
spaces. It is in connection with these mappings that the Sch\"{u}tt result
plays a part.

In this paper we obtain two-sided estimates for the entropy numbers of certain
mappings between vector-valued sequence spaces. More precisely, we consider a
mapping
\[
T:l_{p}^{m}\left(  \left\{  X_{i}\right\}  _{i=1}^{m}\right)  \rightarrow
l_{q}^{m}\left(  \left\{  Y_{i}\right\}  _{i=1}^{m}\right)  ,
\]
where $0<p<q\leq\infty,$ the $X_{k}$ and $Y_{k}$ are quasi-Banach spaces and
$T$ is defined by $Tx=\left(  T_{1}x_{1},...,T_{m}x_{m}\right)  ,$ where
$x=\left(  x_{1},...,x_{m}\right)  ,$ each $T_{i}$ being a bounded linear map
from $X_{i}$ to $Y_{i}.$ Our main focus is on the case when $X=X_{1}%
=...=X_{m},$ $Y=Y_{1}=...=Y_{m},$ $T_{i}=\lambda_{i}T_{0}$ $(i=1,...,m),$
where $T_{0}:X\rightarrow Y$ is a bounded linear operator and the $\lambda
_{i}$ are real numbers. In particular, when $\lambda_{i}=1$ for all
$i\in\{1,...,m\}$ it is shown that knowledge of the entropy numbers
$e_{1}(T_{0}),...,e_{n}(T_{0})$ of the operator $T_{0}$ leads to two-sided
estimates of the entropy numbers $e_{n}(T)$ $(n\in\mathbb{N})$ of $T.$ In
\cite{EN} we gave a generalisation of Sch\"{u}tt's theorem to the case of
finite-dimensional spaces with symmetric bases: in the present paper we use
some ideas from \cite{EN} but in a very simple form. Unlike the volume
arguments mentioned above, and the interpolation techniques appearing in
\cite{GL} and \cite{K} (in \cite{K} the same ideas as in \cite{EN} were
used-see Lemma \ref{Lemma 2.6} below-but with functions with values in the set
$\left\{  -1,0,1\right\}  $ instead of the characteristic functions of
\cite{EN}), our proofs are essentially combinatoric in nature: by
specialisation they give an independent proof of Sch\"{u}tt's theorem.

For previous work on mappings between vector-valued sequence spaces we refer
to \cite{KS}, \cite{CK} and the references contained in these papers. Interest
in the entropy numbers of embeddings of function spaces owes much to
\cite{BiS}, in which Sobolev spaces were considered; since the appearance of
\cite{BiS} the literature on the subject has grown enormously. Many papers
deal with estimates of the entropy numbers of embeddings of Besov spaces with
generalised smoothness; we refer again to \cite{CK}, \cite{EN2}, \cite{Tri2}
and the references given in those works.\

\section{Preliminaries}

\subsection{Background}

Throughout the paper $\log$ is to be understood as $\log_{2},$ $\left[
x\right]  $ will denote the integer part of the real number $x,$
$\mathbb{N}_{0}=\mathbb{N}\cup\{0\}$ and $a\asymp b$ means that $c_{1}a\leq
b\leq c_{2}a$ for some positive constants independent of variables occurring
in $a$ and $b.$ Given quasi-Banach spaces $X$ and $Y,$ we shall write $B(X,Y)$
for the space of all bounded linear maps from $X$ to $Y$, abbreviating this to
$B(X)$ when $X=Y;$ the closed unit ball in $X$ will be denoted by $B_{X}$ and
the quasinorm on $X$ by $\left\Vert \cdot\right\Vert _{X}.$ We recall that a
quasi-Banach space $Z$ is said to be an $r-$Banach space if the quasi-norm
$\left\Vert \cdot\right\Vert _{Z}$ has the property that for all $z_{1}%
,z_{2}\in Z,$%
\[
\left\Vert z_{1}+z_{2}\right\Vert _{Z}^{r}\leq\left\Vert z_{1}\right\Vert
_{Z}^{r}+\left\Vert z_{2}\right\Vert _{Z}^{r};
\]
the quasi-norm is then said to be an $r-$norm. It is well known (see, for
example, \cite{Aok} and \cite{Rol}) that if $Z$ is any quasi-Banach space then
there exist $r\in(0,]$ and an $r-$norm on $Z$ equivalent to the original quasi-norm.
Given a finite set $A$ we shall write $\sharp A$ for the cardinality of the set $A.$

Let $n\in\mathbb{N}$ and suppose that $M$ is a bounded subset of an $r-$normed
quasi-Banach space $Y.$ The $n^{th}$ (dyadic) outer entropy number $e_{n}(M)$
of $M$ is defined to be the infimum of those $\varepsilon>0$ such that $M$ can
be covered by $2^{n-1}$ balls in $Y$ of radius $\varepsilon.$ The $n^{th}$
outer entropy number of a map $T\in B(X,Y)$ (where $X$ and $Y$ are
quasi-Banach spaces) is
\[
e_{n}(T):=e_{n}\left(  T\left(  B_{X}\right)  \right)  ;
\]
the numbers $e_{n}(T)$ are monotonic decreasing as $n$ increases, with
$e_{1}(T)=\left\Vert T\right\Vert ;$ and $T$ is compact if and only if
$\lim_{n\rightarrow\infty}e_{n}(T)=0.$ Moreover, for all $s,n\in\mathbb{N},$
and whenever $T_{1}+T_{2}$ and $R\circ S$ are properly defined bounded linear
operators acting between quasi-Banach spaces,%
\[
e_{s+n-1}\left(  R\circ S\right)  \leq e_{s}\left(  R\right)  e_{n}\left(
S\right)
\]
and, if the target space of $T_{1}$ and $T_{2}$ is an $r-$Banach space,
\[
e_{s+n-1}^{r}\left(  T_{1}+T_{2}\right)  \leq e_{s}^{r}\left(  T_{1}\right)
+e_{n}^{r}\left(  T_{2}\right)  .
\]
Following Pietsch (\cite{Pie}, 12.1.6), for each $n\in\mathbb{N}$ we denote by
$f_{n}(T)$ the (dyadic) inner entropy number of $T\in B(X,Y),$ defined to be
the supremum of all those $\varepsilon>0$ such that there are $x_{1}%
,...,x_{2^{n-1}+1}\in B_{X}$ with $\left\Vert Tx_{i}-Tx_{j}\right\Vert
_{Y}\geq2\varepsilon$ whenever $i,j$ are distinct points of $\left\{
1,2,...,2^{n-1}+1\right\}  .$ If $Y$ is an $r-$Banach space, then the outer
and inner entropy numbers are related by%
\[
f_{n}(T)\leq2^{1/r-1}e_{n}(T)\leq2^{1/r}f_{n}(T)\text{ \ }(n\in\mathbb{N}).
\]
These estimates were proved by Pietsch in the Banach space case ($r=1$); the
proof in the general case merely involves a simple modification of his arguments.

We shall need vector-valued versions of the familiar sequence space $l_{p}$
and its $m-$dimensional subspace $l_{p}^{m}.$ Let $p\in(0,\infty
],m\in\mathbb{N}$ and suppose that $X_{1},...,X_{m}$ are quasi-Banach spaces.
Then
\[
l_{p}^{m}\left(  \left\{  X_{i}\right\}  _{i=1}^{m}\right)  :=\left\{
x=(x_{1},...,x_{m}):x_{i}\in X_{i}\text{ for each }i\right\}  ;
\]
(for simplicity we shall denote this space by $l_{p}^{m}\left(  X_i\right)  $)  endowed with the quasi-norm%
\begin{align*}
\left\Vert x\mid l_{p}^{m}\left(  \left\{  X_{i}\right\}  _{i=1}^{m}\right)
\right\Vert  &  :=\left(  \sum\limits_{i=1}^{m}\left\Vert x_{i}\right\Vert
_{X_{i}}^{p}\right)  ^{1/p}\text{ if }0<p<\infty,\\
\left\Vert x\mid l_{\infty}^{n}\left(  \left\{  X_{i}\right\}  _{i=1}%
^{m}\right)  \right\Vert  &  :=\sup_{1\leq i\leq m}\left\Vert x_{i}\right\Vert
_{X_{i}},
\end{align*}
it is a quasi-Banach space. When $X_{1}=...=X_{m}=X,$ we shall simply denote
this space by $l_{p}^{m}\left(  X\right)  .$

The theorem of Sch\"{u}tt in which we are interested appears in \cite{Sch} and
asserts the following:

\begin{theorem}
\label{Theorem 2.1} Let $m,n\in\mathbb{N}$ and $1\leq p<q\leq\infty;$ denote
by $id$ the natural embedding of $l_{p}^{m}$ in $l_{q}^{m}.$ Then there are
positive constants $c_{1},c_{2},$ independent of $m$ and $n$, such that
\[
c_{1}A(m,n)\leq e_{n}(id)\leq c_{2}A(m,n),
\]
where
\[
A(m,n)=\left\{
\begin{array}
[c]{ccc}%
1, & \text{if} & n\leq\log m,\\
\left(  \frac{\log(m/n+1)}{n}\right)  ^{1/p-1/q}, & \text{if} & \log m\leq
n\leq m,\\
2^{-n/m}m^{1/q-1/p}, & \text{if} & n\geq m.
\end{array}
\right.
\]

\end{theorem}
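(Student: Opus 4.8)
The plan is to establish the upper and lower bounds separately, each in the three ranges $n \le \log m$, $\log m \le n \le m$ and $n \ge m$, and then to patch them together using the multiplicativity and additivity properties of entropy numbers recorded in the Preliminaries.

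For the \emph{upper estimate} I would proceed as follows. In the range $n \le \log m$ there is nothing to prove beyond $e_n(id) \le e_1(id) = \|id\| = 1$, since the embedding $l_p^m \to l_q^m$ has norm $1$ for $p < q$. For the range $n \ge m$ the guiding idea is a volume argument in $\mathbb{R}^m$: after factoring through $l_q^m$, one covers the image $id(B_{l_p^m})$ by translates of a small ball, comparing Lebesgue volumes of $B_{l_p^m}$ and $B_{l_q^m}$; the standard estimate $\mathrm{vol}(B_{l_p^m})^{1/m} \asymp m^{-1/p}$ yields the factor $m^{1/q-1/p}$, while iterating the halving-of-radius step $n/m$ times (valid because $n \ge m$) produces the exponential factor $2^{-n/m}$. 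Concretely, one shows $e_{m+1}(id) \le c\, m^{1/q-1/p}$ by a direct volume count and then uses $e_{km+1}(id) \le e_{m+1}(id)^k$ together with $e_{n}(id) \le e_{km+1}(id)$ for $km+1 \le n$. The genuinely delicate range is $\log m \le n \le m$: here the efficient covering is combinatorial rather than purely volumetric. One partitions the $m$ coordinates into blocks and, on each block, discretises the $l_p$-ball by a net whose cardinality is controlled by a binomial coefficient; choosing the block size $\asymp n/\log(m/n+1)$ and counting the number of admissible "patterns" gives a covering of $id(B_{l_p^m})$ by roughly $2^{n}$ balls of radius $c\,(\log(m/n+1)/n)^{1/p-1/q}$. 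Balancing the two parameters — the per-block accuracy and the number of blocks — against the budget of $2^{n-1}$ balls is the computational heart of this step.

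For the \emph{lower estimate} I would use the inner entropy numbers $f_n$, exploiting the inequality $f_n(T) \le 2^{1/r-1} e_n(T)$ (here $r=1$). Thus it suffices to exhibit, in each range, a large set of well-separated points in $id(B_{l_p^m})$. For $n \le \log m$ the $m$ standard unit vectors $e_1,\dots,e_m$ lie in $B_{l_p^m}$ and are pairwise at $l_q$-distance $2^{1/q}$, and since $m \ge 2^n$ this forces $e_n(id) \ge c$. For $\log m \le n \le m$ the natural separated set consists of suitably normalised $\{0,1\}$-valued (or $\{-1,0,1\}$-valued) vectors with a prescribed number $\ell \asymp n/\log(m/n+1)$ of nonzero entries, scaled by $\ell^{-1/p}$ so as to lie in the unit ball; a counting/packing argument — of the kind referenced in the introduction via Lemma \ref{Lemma 2.6} and the work of \cite{EN}, \cite{K} — shows one can choose $2^{n}$ of them that are pairwise $l_q$-separated by $c\,\ell^{-(1/p-1/q)} \asymp c\,(\log(m/n+1)/n)^{1/p-1/q}$. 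For $n \ge m$ one takes a $2^{-n/m}$-net inside the full $m$-dimensional ball: an elementary volume comparison shows any $\varepsilon$-separated subset of $B_{l_p^m}$ measured in the $l_q$-metric that is \emph{maximal} must have more than $2^{n-1}$ points once $\varepsilon$ is below $c\,2^{-n/m} m^{1/q-1/p}$, giving the claimed lower bound.

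The main obstacle throughout is the middle range $\log m \le n \le m$, and specifically the matching of constants between the combinatorial covering (upper bound) and the combinatorial packing (lower bound): one must verify that the same choice of block length $\ell \asymp n/\log(m/n+1)$ simultaneously yields a covering by $\le 2^{n-1}$ balls and a packing by $\ge 2^{n-1}+1$ points, with radii that differ only by a multiplicative constant independent of $m$ and $n$. Handling the endpoints $n \asymp \log m$ and $n \asymp m$, where $A(m,n)$ must transition continuously between its three formulas, requires a little care but follows by comparing the expressions directly; the transitions are smooth precisely because at $n \asymp \log m$ one has $\log(m/n+1)/n \asymp 1$, and at $n \asymp m$ one has $2^{-n/m} \asymp 1$ and $\log(m/n+1)/n \asymp m^{-1}$ up to constants after absorbing logarithmic factors into the comparison.
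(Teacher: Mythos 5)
The paper itself gives no proof of Theorem \ref{Theorem 2.1}: it is quoted as a known result, with the upper estimate attributed to \cite{Sch} and to \cite{ET} (Proposition 3.2.2, volume arguments) and the lower estimate to \cite{EN} and \cite{K}; the paper's own route to it is indirect, as a specialisation of Theorem \ref{Theorem 3.2} (take $X=Y$ scalar and $T_{0}=id$), proved by purely combinatorial means --- the dyadic covering of the simplex in Lemma \ref{Lemma 2.2}, the reduction to the $k$ largest coordinates with $k\asymp n/\log(2m/n)$ from Lemma \ref{Lemma 3.1}(ii), and the packing Lemma \ref{Lemma 2.6} --- with no volume arguments in the middle range. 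Your sketch instead reconstructs the classical proof from the literature, and its architecture is sound: the trivial bound for $n\le\log m$, volume comparison for $n\ge m$, a combinatorial covering and packing with block length $\ell\asymp n/\log(m/n+1)$ for $\log m\le n\le m$, and the lower bounds routed through the inner entropy numbers $f_{n}$. Two points need repair before this becomes a proof. First, the inequality $e_{km+1}(id)\le e_{m+1}(id)^{k}$ is not literally available, since $id:l_{p}^{m}\rightarrow l_{q}^{m}$ cannot be composed with itself; the standard fix is to factor $id=J\circ id$ with $J$ the identity on $l_{q}^{m}$ and use $e_{km+1}(J)\le c\,2^{-k}$ (which a volume count does give) together with $e_{m+1}(id)\le c\,m^{1/q-1/p}$ and the submultiplicativity $e_{s+t-1}(J\circ id)\le e_{s}(J)e_{t}(id)$. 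Second, the middle-range upper bound is only gestured at: the working mechanism is not a partition into blocks but a reduction to the $k$ largest coordinates of $x$ (the discarded tail costs only $(k+1)^{-(1/p-1/q)}$ by H\"{o}lder's inequality), followed by a union over the $\binom{m}{k}=2^{O(n)}$ possible supports and a net on each $k$-dimensional piece; the ``balancing'' you describe is exactly the content of Lemma \ref{Lemma 3.1}(ii). With these two steps made precise your proposal matches the known proofs, but it is genuinely different from --- and less general than --- the vector-valued combinatorial argument this paper actually supplies.
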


Various authors have contributed to the generalisation of this result to the
case $0<p<q\leq\infty$. For the estimate from above, we refer to
\cite{ET}, Proposition 3.2.2; an elementary proof of the lower estimate in the
case $\log m\leq n\leq m$ is given in Theorem 2 of \cite{EN}, where a
generalisation of Sch\"{u}tt's result for the case of quasinormed spaces with a
symmetric basis was\ presented (such a generalisation is still unknown for the
case $n\geq m);$ a proof of the lower estimate contained in \cite{K}. More
detailed estimates of the constants (upper and lower) and a new proof of the
whole result for $0<p<q\leq\infty$ are given in \cite{GL}.

\subsection{Preparatory results}

Here we present the main ingredients to be used in the proof of the main result.

\begin{lemma}
\label{Lemma 2.2} Let $m\in\mathbb{N}.$ Then there is a set $\Gamma
(m)\subset(0,1]^{n}$ with the following properties:

\noindent(i) $\sharp\Gamma(m)\leq2^{5m/2}.$

\noindent(ii) For any sequence $\left\{  \varepsilon_{i}\right\}  _{i=1}^{m}$
in $\Gamma(m),$ the numbers $n\varepsilon_{i}$ are positive integers for all
$i\in\{1,2,...,n\},$ $\sum\limits_{i=1}^{m}\varepsilon_{i}\leq3$ and for all
$t>0,$%
\[
\sharp\left\{  i\in\{1,2,...,m\}:\varepsilon_{i}\geq t\right\}  \leq2/t.
\]

\noindent(iii) For any sequence $\left\{  \alpha_{i}\right\}  _{i=1}^{m}$ with
each $\alpha_{i}\in\lbrack0,1]$ and $\sum\limits_{i=1}^{m}\alpha_{i}=1$ there
is a sequence $\left\{  \varepsilon_{i}\right\}  _{i=1}^{m}\in\Gamma(m)$ such
that $\alpha_{i}\leq\varepsilon_{i}$ for all $i\in\{1,2,...,m\}.$
\end{lemma}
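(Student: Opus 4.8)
The plan is to build $\Gamma(m)$ explicitly out of dyadic ``profiles''. For a sequence $\{\alpha_i\}_{i=1}^m$ with $\alpha_i\in[0,1]$ and $\sum\alpha_i=1$, the natural dominating sequence is $\varepsilon_i:=2^{-k_i}$, where $k_i\in\mathbb{N}_0$ is the largest integer with $2^{-k_i}\geq\alpha_i$ (so $\alpha_i\le\varepsilon_i\le 2\alpha_i$ unless $\alpha_i=0$, in which case we can simply take $\varepsilon_i=1/m$ rounded up to a power of two, or absorb it into the count below). So I would first define, for a prospective sequence of exponents $k=(k_i)$, the candidate $\varepsilon_i=2^{-k_i}$, and then (ii) demands $n\varepsilon_i\in\mathbb{N}$ — so in fact one should take $\varepsilon_i = 2^{-k_i}$ only as long as $2^{-k_i}\ge 1/n$; once $\alpha_i<1/n$ one caps $\varepsilon_i$ at the smallest value that is both $\ge\alpha_i$ and an integer multiple of $1/n$, i.e.\ $\varepsilon_i=\lceil n\alpha_i\rceil/n$, which for $\alpha_i<1/n$ is just $1/n$. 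I would actually present it cleanly as: $\varepsilon_i = 2^{-k_i}$ where $2^{-k_i}$ is the least power of two that is $\ge\max(\alpha_i,1/n)$ — wait, this needs $n$ to be a power of two; more carefully one wants $\varepsilon_i$ of the form $j_i/n$. Let me instead fix the construction as $\varepsilon_i := \min\{2^{-k}: 2^{-k}\ge \alpha_i,\ k\in\mathbb N_0\}$ when $\alpha_i\ge 1/n$, and $\varepsilon_i:=1/n$ otherwise, and additionally round each $2^{-k}$ up so that $n2^{-k}$ is an integer is automatic only if $1/n$ divides it — so the honest move is: define $\Gamma(m)$ to be all sequences $(\varepsilon_i)$ with each $\varepsilon_i\in\{1/n, 2/n, 4/n, \dots\}\cap(0,1]$ (dyadic multiples of $1/n$) and $\#\{i:\varepsilon_i\ge t\}\le 2/t$ for all $t$; then verify (i), (ii), (iii) for this set.

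For \textbf{(iii)}: given $\{\alpha_i\}$, let $\varepsilon_i$ be the least dyadic multiple of $1/n$ that is $\ge\max(\alpha_i,1/n)$. Then $\alpha_i\le\varepsilon_i$, so it only remains to check $(\varepsilon_i)\in\Gamma(m)$, i.e.\ the counting bound $\#\{i:\varepsilon_i\ge t\}\le 2/t$. If $\varepsilon_i\ge t$ then either $\alpha_i\ge t/2$ (because $\varepsilon_i\le 2\max(\alpha_i,1/n)$, so $\varepsilon_i\ge t$ forces $\max(\alpha_i,1/n)\ge t/2$), hence $\#\{i:\alpha_i\ge t/2\}\le \sum_i\alpha_i/(t/2)=2/t$ by Markov; the case driven by $1/n\ge t/2$ gives $t\le 2/n$, and then $2/t\ge n\ge m$ trivially (assuming $n\ge m$; if $n<m$ a small separate argument is needed, or one notes the bound $2/t$ is $\ge m$ anyway since $t\le 2/n\le 2/\ldots$) — this boundary bookkeeping is the one genuinely fiddly spot and I would handle it by noting that when $t\le 2/n$ the right-hand side $2/t\ge n$ and separately $2/t \ge m$ needs $n \ge m$, which is the regime where this lemma is applied, or alternatively one replaces $1/n$ by $1/\max(m,n)$ throughout.

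For \textbf{(i)}: the number of sequences in $\Gamma(m)$ is bounded by a counting argument. Each $\varepsilon_i=2^{j_i}/n$ with $j_i\in\mathbb Z$, $j_i\le\log n$; and the constraint $\sum\#\{i:\varepsilon_i\ge 2^{-\ell}\}\le 2^{\ell+1}$ for each $\ell$ severely limits how many indices can carry a large value. Concretely $\sum_i\varepsilon_i\le\sum_{\ell\ge 0}\#\{i:\varepsilon_i> 2^{-\ell-1}\}2^{-\ell}\le\sum_{\ell\ge0}(2^{\ell+2})2^{-\ell}$ — that diverges, so instead one writes $\sum_i\varepsilon_i=\int_0^\infty\#\{i:\varepsilon_i\ge t\}\,dt$ and splits the integral at $t=1/m$, getting $\sum_i\varepsilon_i\le 1 + \int_{1/m}^1 (2/t)dt \le 1+2\log m$ — hmm, that gives $\sum\varepsilon_i\lesssim\log m$, not $\le 3$; so the definition of $\Gamma(m)$ must be tightened, e.g.\ also impose $\sum_i\varepsilon_i\le 3$ directly as a defining condition, and check (iii) still produces such a sequence (it does not in general!). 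So the \emph{correct} construction, which I would ultimately use, is the one from \cite{EN}: sort $\alpha$ decreasingly, and for the $i$-th largest take $\varepsilon_i$ to be the least dyadic multiple of $1/n$ above $1/i$ — then $\alpha_{(i)}\le 1/i$ fails... rather $\alpha_{(i)}\le(\sum_{j\le i}\alpha_{(j)})/i\le 1/i$, so $\varepsilon_i\asymp 1/i$ works, giving $\sum\varepsilon_i\asymp\log m$ still. The genuine fix is that $\Gamma(m)$ need only \emph{dominate} $\alpha$ coordinatewise after a permutation, and one should define $\varepsilon$ on the sorted sequence and count the number of resulting ``staircase'' shapes. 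This is the main obstacle: pinning down a construction that simultaneously gives the cardinality bound $2^{5m/2}$, the sum bound $3$, and coordinatewise domination — I expect the resolution is to take $\varepsilon_i$ dyadic in $[1/n,1]$, declare $\Gamma(m)$ to be those with $\sum\varepsilon_i\le 3$ AND the Markov-type count, use a ``doubling then truncating'' argument for (iii) (replace $\alpha$ by $2\alpha_i$ only on a set where $\sum\le 1$... ), and count (i) via: the number of nonincreasing dyadic-valued sequences with $\sum\le 3$ is at most the number of partitions-type objects, bounded by $2^{5m/2}$. I would carry out (iii) first to fix the construction, then (ii) is immediate from the definition, then (i) by the entropy/counting estimate on monotone step functions.
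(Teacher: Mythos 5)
Your exploration circles the right ingredients (dyadic values, the Markov-type count from $\sum\alpha_i=1$, imposing both constraints as defining conditions), but it does not close, and at the decisive point it asserts something false. The paper takes $E=\left\{2^{k}/m:k\in\mathbb{N}_{0},\,2^{k}<m\right\}\cup\{1\}$ --- dyadic multiples of $1/m$, not of $1/n$ (the $n$'s in the statement are typos for $m$, as the use $m_i=m\varepsilon_i$ in Lemma \ref{Lemma 2.3} confirms) --- and defines $\Gamma(m)$ as all sequences in $E^{m}$ satisfying \emph{both} $\sum\varepsilon_i\le 3$ and the counting bound. You propose exactly this ``tightened'' definition but then claim that (iii) fails for it (``it does not in general!''). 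It does not fail: taking $\varepsilon_i$ to be the least element of $E$ with $\varepsilon_i\ge\alpha_i$ gives $\varepsilon_i\le\max(2\alpha_i,1/m)\le 2\alpha_i+1/m$, whence $\sum\varepsilon_i\le 2\sum\alpha_i+m\cdot(1/m)=3$; and $\varepsilon_i\ge t$ with $t>1/m$ forces $\alpha_i>t/2$, so the count is $\le 2/t$ by Markov, while for $t\le 1/m$ the bound $2/t\ge 2m$ is trivial. The point you missed is that the grid floor must be $1/m$ (so that rounding $m$ coordinates up costs at most $1$ in total); your insistence on $1/n$ is what made the sum bound look unattainable and sent you toward sorting/permutation fixes, which would weaken the lemma to domination up to permutation --- not what is stated, and not what Lemma \ref{Lemma 2.3} uses.

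The second gap is the cardinality bound (i), for which you offer only ``the number of nonincreasing dyadic-valued sequences with $\sum\le 3$ is bounded by partitions-type objects.'' The members of $\Gamma(m)$ are not nonincreasing; one must count which \emph{indices} receive which values. The paper's argument: with $A(k)=\{i:\varepsilon_i=2^{k}/m\}$ and $B(k)=\{i:\varepsilon_i\ge 2^{k}/m\}$, the counting condition gives $\sharp B(k)\le m/2^{k-1}$; the four disjoint sets $A(0),A(1),A(2),B(3)$ partition $\{1,\dots,m\}$, contributing a factor $4^{m}=2^{2m}$, and for $k\ge 3$ the set $A(k)$ is a subset of the already-determined $B(k)$, contributing $\prod_{k\ge3}2^{m/2^{k-1}}=2^{m/2}$, for a total of $2^{5m/2}$. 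Nothing in your proposal substitutes for this step, so as written the proof of (i) is missing and the proof of (iii) rests on an incorrect claim.
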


\begin{proof}
Put%
\[
E=\left\{  2^{k}/m:k\in\mathbb{N}_{0}, 2^{k}<m \right\}  \cup\left\{1\right\}
\]
and define $\Gamma(m)$ to be the set of all sequences $\left\{  \varepsilon
_{i}\right\}  _{i=1}^{m}\in E^{m}$ such that $\sum\limits_{i=1}^{m}%
\varepsilon_{i}\leq3$ and $\sharp\left\{  i\in\{1,2,...,m\}:\varepsilon
_{i}\geq t\right\}  \leq2/t$ for all $t>0.$ This ensures that (ii) holds. To
estimate the number of elements in $\Gamma(m)$ we observe that if $\left\{
\varepsilon_{i}\right\}  _{i=1}^{m}\in\Gamma(m),$ $k\in\mathbb{N}_{0},$
$A(k):=\left\{  i\in\{1,2,...,m\}:\varepsilon_{i}=2^{k}/m\right\}  $ and
$B(k):=\left\{  i\in\{1,2,...,m\}:\varepsilon_{i}\geq2^{k}/m\right\}  ,$
then%
\[
\sharp A(k)\leq\sharp B(k)\leq\min\left(  m,m/2^{k-1}\right)  .
\]
Fix the sets $A(0),A(1),A(2)$ and $B(3);$ note that $\sharp B(3)\leq m/4.$
Then for the choice of the sets $A(k)$ $(k\geq3)$ there are at most
\[
\prod\limits_{k=3}^{\infty}2^{m/2^{k-1}}=2^{m/2}%
\]
possibilities. Since  $A(k)\subset B(k),$ $\sharp B(k)\le m/2^{k-1},$ and
it follows that once the sets $A(0), \dots A(k-1)$ have been chosen,  there are at most   $2^{m/2^{k}}$ possibilities for the choice of
$A(k,$  $k\ge3.$

We now claim that given any non-empty finite set $S$ with $m$ elements, there
are $2^{2m}$ distinct representations of $S$ as the union of $4$ disjoint
subsets. For if $S=\{s_{1},...,s_{m}\}$ and $S=\cup_{j=1}^{4}S_{j},$ where the
$S_{j}$ are disjoint, then each $s_{i}$ has to belong to some $S_{j},$ and as
there are $4$ choices for each $s_{i},$ the total number of choices is
$4^{m}=2^{2m}.$ Thus $\sharp\Gamma(n)\leq2^{2m}\cdot2^{m/2}=2^{5m/2},$ so that
(i) holds. The final property (iii) is established in a routine fashion and is
left to the reader.
\end{proof}

\begin{lemma}
\label{Lemma 2.3}Let $m\in\mathbb{N}\backslash\{1\},$ suppose that
$0<p<q\leq\infty$ and put $\alpha=1/p-1/q.$ For each $i\in\{1,2,...,m\}$ let
$X_{i},Y_{i}$ be quasi-Banach spaces and $T_{i}\in B\left(  X_{i}%
,Y_{i}\right)  .$ Suppose that for every $i,s$ $\in\{1,2,...,m\},$%
\[
e_{s}\left(  T_{i}\right)  \leq(m/s)^{\alpha}.
\]
Let $T:l_{p}^{m}\left(  X_{i}\right)  \rightarrow
l_{q}^{m}\left( Y_{i}\right)  $ be the linear
operator defined by
\[
T(x)=\left(  T_{1}(x_{1}\right)  ,...,T_{m}(X_{m})),\text{ \ }x=(x_{1}%
,...,x_{m})\in X_{1}\times X_{2}\times...X_{m}.
\]
Then%
\[
e_{5m}(T)\leq3^{1/q}.
\]

\end{lemma}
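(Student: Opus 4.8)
The plan is to build a good covering of $T(B_{l_p^m(X_i)})$ by decomposing each admissible $x=(x_1,\dots,x_m)$ according to the relative sizes of the blocks $\|x_i\|_{X_i}$. Writing $\alpha_i=\|x_i\|_{X_i}^p$ (after normalising so that $\sum_i\alpha_i\le 1$), Lemma \ref{Lemma 2.2}(iii) furnishes a sequence $\{\varepsilon_i\}_{i=1}^m\in\Gamma(m)$ dominating $\{\alpha_i\}$; since $\sharp\Gamma(m)\le 2^{5m/2}$, there are few such sequences, and it will suffice to cover, for each fixed $\varepsilon\in\Gamma(m)$, the set of those $Tx$ with $\|x_i\|_{X_i}^p\le\varepsilon_i$ for all $i$. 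For such $x$ the $i$-th block lies in $\varepsilon_i^{1/p}B_{X_i}$, so $T_ix_i$ lies in $\varepsilon_i^{1/p}T_i(B_{X_i})$, and I can cover this block using the hypothesis $e_s(T_i)\le(m/s)^\alpha$.

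The key quantitative step is to choose, for each $i$, a suitable number $s_i\in\mathbb N$ of balls to cover the $i$-th block so that (a) the total budget $\sum_i\log(2\cdot 2^{s_i-1})\asymp\sum_i s_i$ stays below roughly $5m$, giving $2^{5m-1}$ balls overall, and (b) the resulting covering radius of $T(B)$ in $l_q^m(Y_i)$ is at most $3^{1/q}$. For block $i$ the covering radius contributed (in the $q$-th power, since we are in an $l_q$ sum) is about $\varepsilon_i\cdot e_{s_i}(T_i)^q\le\varepsilon_i(m/s_i)^{\alpha q}$. A natural choice is $s_i\asymp m\varepsilon_i$ — which is a positive integer by Lemma \ref{Lemma 2.2}(ii) — making $e_{s_i}(T_i)^q\le(1/\varepsilon_i)^{\alpha q}$, so the $i$-th contribution to the $q$-th power of the radius is $\varepsilon_i\cdot\varepsilon_i^{-\alpha q}=\varepsilon_i^{1-\alpha q}$. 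Here $1-\alpha q=1-(1/p-1/q)q=q/p>0$... one must be a little careful: when $q=\infty$ one instead takes the sup, controlled directly by $\varepsilon_i e_{s_i}(T_i)\le 1$. Summing, $\sum_i\varepsilon_i^{q/p}\le\sum_i\varepsilon_i\le 3$ (using $q/p>1$ and $\varepsilon_i\le1$), which gives covering radius $\le 3^{1/q}$ as required. For the count, $\sum_i s_i\lesssim m\sum_i\varepsilon_i\le 3m$; combined with the factor $2^{5m/2}$ from the choice of $\varepsilon\in\Gamma(m)$ and the product of the per-block ball counts, one checks the total number of balls is at most $2^{5m-1}$, so $e_{5m}(T)\le 3^{1/q}$.

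I expect the main obstacle to be the bookkeeping that makes the constants line up exactly: one must verify that $\sum_{i} s_i$ together with $\frac{5m}{2}$ (from $\log\sharp\Gamma(m)$) fits inside $5m-1$, which forces the choice $s_i$ to be something like $\lceil m\varepsilon_i\rceil$ or even $s_i = \max(1,\ m\varepsilon_i - C)$ with $C$ chosen so that $\sum_i(\text{const})$ is absorbed, and then rechecking that shrinking $s_i$ by an additive constant only worsens $e_{s_i}(T_i)$ by a harmless multiplicative factor that still keeps the radius below $3^{1/q}$. A secondary technical point is the degenerate blocks where $\alpha_i=0$ (hence $\varepsilon_i$ small) and the case $q=\infty$, both of which need the sup-version of the $l_q$ estimate and the bound $\sharp\{i:\varepsilon_i\ge t\}\le 2/t$ from Lemma \ref{Lemma 2.2}(ii) to control how many blocks actually need more than one covering ball. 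Once these are handled, assembling the global $\varepsilon$-net from the per-$\varepsilon$, per-block nets and invoking the $r$-norm (here trivially $q\ge 1$ or a fixed quasi-norm) triangle inequality for the radius of the product cover completes the argument.
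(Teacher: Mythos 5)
Your proposal follows essentially the same route as the paper: decompose $B_{l_p^m(X_i)}$ via Lemma \ref{Lemma 2.2} into at most $2^{5m/2}$ products $\prod_{i=1}^m\varepsilon_i^{1/p}B_{X_i}$, cover the $i$-th block of each product by $2^{m\varepsilon_i-1}$ balls of radius $\varepsilon_i^{1/p}(m/(m\varepsilon_i))^{\alpha}=\varepsilon_i^{1/q}$, take the $l_q$-sum of the block radii to get $\left(\sum_i\varepsilon_i\right)^{1/q}\le 3^{1/q}$, and verify that the total ball count fits inside $2^{5m-1}$ --- exactly the paper's argument (which packages the per-block assembly as Lemma \ref{Lemma 2.4}). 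The only blemish is an arithmetic slip in the middle: the $q$-th power of the $i$-th block radius is $\varepsilon_i^{q/p}e_{s_i}(T_i)^q=\varepsilon_i$, not $\varepsilon_i\cdot e_{s_i}(T_i)^q=\varepsilon_i^{1-\alpha q}$, and $1-\alpha q=2-q/p$ rather than $q/p$; this self-corrects, since the quantity you end up summing is still $\sum_i\varepsilon_i\le 3$.
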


\begin{proof}
Let $W=$ $l_{p}^{m}\left(  X_{i}\right)  .$ Given
any point $x\in B_{W},$ there is a sequence $\left\{  \alpha_{i}\right\}
_{i=1}^{m}$ with each $\alpha_{i}\in\lbrack0,1]$ and $\sum\limits_{i=1}%
^{m}\alpha_{i}=1$ such that $x\in\prod\limits_{i=1}^{m}\alpha_{i}^{1/p}B_{X_{i}%
}.$ By Lemma \ref{Lemma 2.2}, it follows that
\[
B_{W}\subset\bigcup\left(  \prod\limits_{i=1}^{m}\varepsilon_{i}^{1/p}%
B_{X_{i}}\right)  ,
\]
where the union is taken over all sequences $\left\{  \varepsilon_{i}\right\}
_{i=1}^{m}\in\Gamma(m),$ where $\Gamma(m)$ is the set defined in Lemma
\ref{Lemma 2.2}. Viewing
\[
K:=\prod\limits_{i=1}^{m}\varepsilon_{k}^{1/p}T_{i}\left(  B_{X_{i}}\right)
.
\]
as a subset of $l_{q}^{m}\left(  \left\{  Y_{i}\right\}  _{i=1}^{m}\right)  ,$
we estimate $e_{2n+1}(K).$ Put $m_{i}=m\varepsilon_{i}$ $(i=1,2,...,m).$ Then
\[
e_{m_{i}}\left(  \varepsilon_{i}^{1/p}T_{i}\left(  B_{X_{i}}\right)  \right)
\leq\varepsilon_{i}^{1/p}\left(  \frac{n}{n\varepsilon_{i}}\right)  ^{\alpha
}=\varepsilon_{i}^{1/q}.
\]
Since $\sum\limits_{i=1}^{m}(m_{i}-1)\leq3m-m=2m,$ application of the
following simple lemma, the proof of which is omitted, shows that
\[
e_{2m+1}\left(  K\right)  \leq\left(  \sum\limits_{i=1}^{m}\varepsilon
_{i}\right)  ^{1/q}\leq3^{1/q}.
\]
As $\sharp\Gamma(m)\leq2^{5m/2},$ we see that $e_{N}(T)\leq3^{1/q},$ where
$N=\left[  \frac{5m}{2}\right]  +2n+1:$ the result follows.
\end{proof}

\begin{lemma}
\label{Lemma 2.4}Let $m,n\in\mathbb{N}$ and let $n_{1},...,n_{m}$ be
non-negative integers such that $n-1=\sum\limits_{i=1}^{m}(n_{i}-1);$ let
$q\in(0,\infty].$ For each $i\in\left\{  1,2,...,m\right\}  $ suppose that
$Z_{i},Y_{i}$ are quasi-Banach spaces and $U_{i}\in B(Z_{i},Y_{i}).$ Let
$U:l_{\infty}^{m}\left( Z_{i}\right)
\rightarrow l_{q}^{m}\left(   Y_{i}\right)  $ be
the linear operator defined by%
\[
U(z)=\left(  U_{1}(z_{1}),...,U_{m}(z_{m})\right)  ,\text{ }z=(z_{1}%
,...,z_{m})\in Z_{1}\times...\times Z_{m}.
\]
Then
\[
e_{n}(U)\leq\left(  \sum\limits_{i=1}^{m}e_{n_{i}}^{q}(U_{i})\right)  ^{1/q}.
\]

\end{lemma}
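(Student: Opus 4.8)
The plan is to exploit the product structure of the unit ball of $l_{\infty}^{m}(Z_{i})$. Since the quasi-norm on $l_{\infty}^{m}(Z_{i})$ is the maximum of the coordinate quasi-norms, its closed unit ball $B$ is exactly the product $\prod_{i=1}^{m}B_{Z_{i}}$, and therefore $U(B)=\prod_{i=1}^{m}U_{i}(B_{Z_{i}})$, regarded as a subset of $l_{q}^{m}(Y_{i})$. It thus suffices to cover each factor $U_{i}(B_{Z_{i}})$ economically in $Y_{i}$ and then take the corresponding product covering. (We may assume every $n_{i}\geq1$; this is the only situation that arises, and with the convention $e_{0}(\cdot)=\infty$ a coordinate with $n_{i}=0$ makes the asserted inequality trivial.)

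First I would fix $\delta>0$. By the definition of the outer entropy number as an infimum, for each $i\in\{1,\dots,m\}$ there are points $y_{i,1},\dots,y_{i,N_{i}}\in Y_{i}$, where $N_{i}=2^{n_{i}-1}$, such that $U_{i}(B_{Z_{i}})$ is contained in the union of the balls $\{y\in Y_{i}:\|y-y_{i,k}\|_{Y_{i}}\leq e_{n_{i}}(U_{i})+\delta\}$, $k=1,\dots,N_{i}$. (For $n_{i}=1$ this merely says that $U_{i}(B_{Z_{i}})$ lies in the single ball of radius $\|U_{i}\|+\delta=e_{1}(U_{i})+\delta$.)

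Next I would assemble the product covering and count. For each multi-index $k=(k_{1},\dots,k_{m})$ with $k_{i}\in\{1,\dots,N_{i}\}$ set $y_{k}:=(y_{1,k_{1}},\dots,y_{m,k_{m}})\in l_{q}^{m}(Y_{i})$. The number of such multi-indices is
\[
\prod_{i=1}^{m}N_{i}=\prod_{i=1}^{m}2^{n_{i}-1}=2^{\sum_{i=1}^{m}(n_{i}-1)}=2^{n-1},
\]
which is precisely the number of balls allowed in the definition of $e_{n}(U)$; this is where the hypothesis $n-1=\sum_{i=1}^{m}(n_{i}-1)$ enters. Given any $z=(z_{1},\dots,z_{m})\in B$ we have $z_{i}\in B_{Z_{i}}$ for every $i$, so $U_{i}z_{i}$ lies in one of the balls centred at the points $y_{i,k}$; selecting such a centre $y_{i,k_{i}}$ for each $i$ yields a multi-index $k$ for which
\[
\left\|Uz-y_{k}\mid l_{q}^{m}(Y_{i})\right\|=\left(\sum_{i=1}^{m}\|U_{i}z_{i}-y_{i,k_{i}}\|_{Y_{i}}^{q}\right)^{1/q}\leq\left(\sum_{i=1}^{m}\bigl(e_{n_{i}}(U_{i})+\delta\bigr)^{q}\right)^{1/q},
\]
with the obvious replacement of the sum by a supremum when $q=\infty$. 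Hence $U(B)$ is covered by $2^{n-1}$ balls of the common radius appearing on the right, so $e_{n}(U)\leq\bigl(\sum_{i=1}^{m}(e_{n_{i}}(U_{i})+\delta)^{q}\bigr)^{1/q}$; letting $\delta\to0^{+}$ gives the stated inequality.

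I do not expect a genuine obstacle: the argument is a piece of bookkeeping. The three points needing a little care are (a) noticing that the $l_{\infty}$ norm on the domain is exactly what makes $B$ a product, so that product coverings are available; (b) the elementary count $\prod_{i}2^{n_{i}-1}=2^{n-1}$, which is where the constraint relating $n$ to the $n_{i}$ is used; and (c) handling the infimum in the definition of $e_{n_{i}}(U_{i})$ via the auxiliary parameter $\delta$ and a limiting argument (equivalently, one works throughout with radii $\varepsilon_{i}>e_{n_{i}}(U_{i})$ and takes infima at the end). The quasi-Banach setting causes no difficulty, since the argument never invokes a triangle inequality — it uses only that each $U_{i}z_{i}$ belongs to one of the covering balls.
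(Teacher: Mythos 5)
Your proof is correct, and since the paper explicitly omits the proof of this ``simple lemma,'' your product-covering argument (cover each $U_i(B_{Z_i})$ by $2^{n_i-1}$ balls, take products, and use $\prod_i 2^{n_i-1}=2^{n-1}$ together with the definition of the $l_q^m(Y_i)$ quasi-norm) is evidently the intended one. The handling of the case $n_i=0$ and the limiting argument in $\delta$ are both fine.
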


In the next section we shall need the following estimates, proved in
\cite{EN1} (or \cite{EN}) and \cite{EN2}.

\begin{lemma}
\label{Lemma 3.1}

\noindent(i) If $k,m\in\mathbb{N},$ $k\leq m,$ then
\[
\left(  \frac{m}{k}\right)  ^{k}\leq\binom{m}{k}\leq\left(  \frac{em}%
{k}\right)  ^{k}.
\]

\noindent(ii) There are positive constants $c_{1},c_{2}$ such that for any
$m,n\in\mathbb{N}$ with $2\leq n\leq m\leq2^{n}$ the following estimates hold:%
\[
2^{c_{1}n}\leq\binom{m}{k}\leq2^{c_{2}n},
\]
where $k$ is the smallest positive integer such that
\[
k\geq A:=\frac{n}{2\log\left(  2m/n\right)  }.
\]

\end{lemma}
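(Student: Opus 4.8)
The plan is to derive both parts from elementary manipulations of the binomial coefficient, with (ii) reduced to (i); these are standard estimates (cf.\ \cite{EN1}, \cite{EN2}), and I aim only for the shortest self-contained route. For (i), write $\binom{m}{k}=\prod_{j=0}^{k-1}\frac{m-j}{k-j}$. Since $m\ge k$, each factor satisfies $\frac{m-j}{k-j}\ge\frac{m}{k}$ (cross-multiplying, this reads $k(m-j)\ge m(k-j)$, i.e.\ $mj\ge kj$), giving $\binom{m}{k}\ge(m/k)^{k}$. For the upper bound, combine $\binom{m}{k}\le m^{k}/k!$ with $k!\ge(k/e)^{k}$---the latter immediate from $e^{k}=\sum_{j\ge0}k^{j}/j!\ge k^{k}/k!$---to get $\binom{m}{k}\le(em/k)^{k}$.

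For (ii), set $L:=\log(2m/n)$. The hypotheses $2\le n\le m\le2^{n}$ force $L\ge1$ (from $m\ge n$) and $L\le n$ (from $m\le2^{n}$, using $n\ge2$), so $A=n/(2L)\in[\tfrac12,\tfrac n2]$, and the least integer $k\ge A$ satisfies $A\le k\le A+1\le n\le m$; in particular (i) applies. Taking logarithms in (i) reduces the claim to estimating $k\log(m/k)$ and $k\log(em/k)$. For the right-hand inequality, $k\ge A$ gives $\log(m/k)\le\log(m/A)=\log(L2^{L})=L+\log L\le2L$, whence $k\log(em/k)\le(A+1)(2L+\log e)\le5n$, so $c_{2}=5$ works.

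The left-hand inequality is the substantive step: by (i) it suffices to show $\log(m/k)\ge cL$ for some absolute $c>0$, since then $\log\binom{m}{k}\ge A\log(m/k)\ge\tfrac{c}{2}\,n$. From $k\le A+1$ and $L\le n$ one gets $m/k\ge\frac{2mL}{n+2L}\ge\frac{L2^{L}}{3}$, hence $\log(m/k)\ge L+\log L-\log3\ge\tfrac12L$ as soon as $L\ge2$. For $1\le L<2$---equivalently $\tfrac m2<n\le m$---this estimate degenerates, and I would argue directly instead: if $n<m$ then $k\le\tfrac n2+1\le\tfrac{3m}{4}$, so $\log(m/k)\ge\log\tfrac43$; the case $n=m$ (where $k=\lceil n/2\rceil$, so $m/k\ge\frac{2n}{n+1}$) and the finitely many cases with $n=2$ are handled by inspection. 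Combining these regimes yields $c_{1}$ (after shrinking it to absorb the small cases). The main obstacle is exactly this boundary region $m\asymp n$, where $A$ and $L$ sit at their extreme values and the crude bound $m/k\gtrsim L2^{L}$ collapses, forcing the short case split; everything else is routine arithmetic.
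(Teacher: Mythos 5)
Your proof is correct and follows essentially the same route as the paper: part (i) via the standard product and factorial bounds, and part (ii) by replacing $k$ with $A=n/(2L)$ and observing that $\log(m/A)=L+\log L\asymp L$, so that $\log\binom{m}{k}\asymp A\log(m/A)\asymp n$. You merely fill in the details the paper omits --- the explicit constants and the boundary regime $n\le m<2n$, which the paper sidesteps by assuming $m\ge 2n$ and declaring that ``the rest follows easily.''
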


\begin{proof}
As (i) is well known we simply deal with (ii) and suppose that $m\geq2n.$ By
(i) we have%
\begin{align*}
\frac{\log\binom{m}{k}}{n}  &  \asymp\frac{k\log(m/k)}{n}\asymp\frac
{A\log\left(  m/A\right)  }{n}\\
&  =\frac{\log(2m/n)-\log\log(2m/n)}{2\log(2m/n)}.
\end{align*}
The rest follows easily.
\end{proof}

\begin{lemma}
\label{Lemma 2.5}Let $m,n\in\mathbb{N},$ $b\in(0,\infty)$ and $0<p<q\leq
\infty;$ put $\alpha=1/p-1/q.$ For each $i\in\{1,2,...,m\}$ let $X_{i},Y_{i}$
be quasi-Banach spaces and $T_{i}\in B(X_{i},Y_{i}).$ Let $T:l_{p}\left(
\left\{  X_{i}\right\}  _{i=1}^{m}\right)  \rightarrow$\ $l_{q}\left(
\left\{  Y_{i}\right\}  _{i=1}^{m}\right)  $ \ be the linear operator defined
by
\[
T(x)=\left(  T_{1}(x_{1}),T_{2}(x_{2}),...,T_{m}(x_{m})\right)  ,\text{
}x=(x_{1},x_{2},...,x_{m})\in X_{1}\times X_{2}\times...\times X_{m}.
\]
Suppose that $f_{n}(T_{i})\geq b$ for each $i\in\{1,2,...,m\}.$ Then%
\[
f_{k}(T)\geq2^{-1/q}bm^{-\alpha},\text{ where }k=[((n-1)m/6].
\]

\end{lemma}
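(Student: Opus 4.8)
The plan is to reduce the assertion to a Gilbert--Varshamov type counting estimate, the hypothesis $f_n(T_i)\ge b$ playing the role of a ``$(2^{n-1}+1)$-letter alphabet'' in each block. First I would fix $b'<b$ and, using $f_n(T_i)\ge b$, choose in each block $i$ points $z_{i,1},\dots,z_{i,N_0}\in B_{X_i}$, where $N_0:=2^{n-1}+1$, whose $T_i$-images are pairwise at distance $\ge 2b'$. To a word $w=(w_1,\dots,w_m)\in\{1,\dots,N_0\}^m$ I associate the point $x_w:=(z_{1,w_1},\dots,z_{m,w_m})$; since each coordinate lies in $B_{X_i}$ one has $\|x_w\mid l_p^m(X_i)\|\le m^{1/p}$, so $m^{-1/p}x_w\in B_W$ with $W:=l_p^m(X_i)$. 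The elementary but crucial point is that for two words $w\ne w'$,
\[
\bigl\|T(m^{-1/p}x_w)-T(m^{-1/p}x_{w'})\mid l_q^m(Y_i)\bigr\|\ \ge\ 2b'\,m^{-1/p}\,d(w,w')^{1/q},
\]
where $d(w,w')$ denotes the Hamming distance (the right-hand side being $2b'm^{-1/p}$ when $q=\infty$). Hence any set $C\subset\{1,\dots,N_0\}^m$ of codewords with pairwise Hamming distance $\ge m/2$ yields, after scaling by $m^{-1/p}$, a family of points in $B_W$ whose $T$-images are pairwise at distance $\ge 2b'm^{-1/p}(m/2)^{1/q}=2\cdot 2^{-1/q}b'm^{-\alpha}$; so $f_k(T)\ge 2^{-1/q}b'm^{-\alpha}$ as soon as $\sharp C\ge 2^{k-1}+1$, and then letting $b'\to b$ finishes the proof. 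Note that the precise constant $2^{-1/q}$ in the conclusion is exactly what a minimum Hamming distance $m/2$ produces.

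It then remains to exhibit such a $C$ of size at least $2^{k-1}+1$, $k=[(n-1)m/6]$. I would do this greedily: a word is admissible unless it lies within Hamming distance $<m/2$ of an already chosen one, and the number of such words is at most $\sum_{0\le j<m/2}\binom{m}{j}(N_0-1)^{j}\le 2^{m-1}(N_0-1)^{m/2}=2^{\,m-1}\,2^{(n-1)m/2}$; hence one may keep at least $N_0^{\,m}\big/2^{\,m-1+(n-1)m/2}\ \ge\ 2^{(n-1)m/2-m+1}$ words. For $n\ge 4$ this already exceeds $2^{(n-1)m/6}\ge 2^{k}$, which is what is needed, the factor $1/6$ being a convenient choice leaving ample room (the crude bounds on the binomial coefficients invoked here are covered by Lemma \ref{Lemma 3.1}(i); for small $n$ one activates only a suitable fraction of the $m$ coordinates, setting the remaining ones to $0$, so that the norm of $x_w$ is controlled by the number of active coordinates and a correspondingly smaller minimum distance suffices).

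The step I expect to be the real content is this counting: one must ensure that the alphabet size $2^{n-1}+1$ --- all that $f_n(T_i)\ge b$ furnishes --- is large enough for a code of length $m$ and minimum distance $m/2$ over that alphabet to carry at least $2^{(n-1)m/6}$ words, and it is here that the constant $1/6$ is calibrated. Everything else --- the behaviour of the quasinorms under the diagonal operator, and the passage from a separated set to the lower bound for the inner entropy number $f_k(T)$ --- is routine bookkeeping; in particular the case $q=\infty$ is immediate, since there any two distinct codewords already do the job.
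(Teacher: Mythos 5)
You should first be aware that the paper itself offers no proof of Lemma \ref{Lemma 2.5} (it is stated and then invoked in the proof of Theorem \ref{Theorem 3.2}), so your argument can only be judged on its own terms. The reduction you set up is the natural one and its first half is sound: extracting $2^{n-1}+1$ alphabet letters per block from $f_n(T_i)\ge b$, the normalisation $m^{-1/p}x_w$, the separation estimate $2b'm^{-1/p}d(w,w')^{1/q}$, and the observation that the constant $2^{-1/q}$ forces minimum Hamming distance $\ge m/2$ are all correct, and your Gilbert--Varshamov count does give $2^{(n-1)m/2-m+1}\ge 2^{k}$ codewords once $n\ge 4$. The genuine gap is at $n=2$ and $n=3$, which you dispose of in a parenthesis. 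For $n=2$ you need $2^{m/6}$ words of length $m$ over a $3$-letter alphabet with minimum distance $m/2$; the Gilbert--Varshamov exponent there is $\log_2 3-\tfrac32=0.085\ldots<\tfrac16$, and for $n=3$ it is $\log_2 5-2=0.3219\ldots<\tfrac13$. So the greedy argument cannot produce the required code, and no standard construction beats Gilbert--Varshamov over alphabets of size $3$ or $5$. These small cases are not dispensable: in the proof of Theorem \ref{Theorem 3.2} the term $(k/n)^{\alpha}e_k(T_0)$ with $k=2$ is obtained precisely by applying the lemma with its ``$n$'' equal to $2$.

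Your proposed repair --- activate only $l<m$ coordinates, renormalise by $l^{-1/p}$, and accept a smaller minimum distance --- does not obviously recover the stated constant. On the symmetric difference of two supports a coordinate contributes only $\Vert T_i z\Vert\ge b'$, not $2b'$, so two codewords whose supports disagree in $s$ places are separated by about $b'l^{-1/p}s^{1/q}$, and matching $2\cdot 2^{-1/q}b'm^{-\alpha}$ forces $s\ge 2^{q-1}(l/m)^{q/p}m$; when $q>2$ and $p$ is close to $q$ this exceeds the maximal possible value $2l$, which throws you back onto a relative-distance-$\tfrac12$ code over the small alphabet, i.e.\ the same obstruction. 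Two honest ways out: either prove the lemma with $k=[(n-1)m/C]$ for a larger absolute constant $C$ (already $C=12$ makes the Gilbert--Varshamov computation, carried out with $N_0^m=(2^{n-1}+1)^m$ rather than $2^{(n-1)m}$, close the cases $n=2,3$ as well, and any absolute $C$ and any positive constant in place of $2^{-1/q}$ would suffice for the applications in Theorem \ref{Theorem 3.2}, whose constants are unspecified); or find a genuinely different construction for $n\in\{2,3\}$. The paper's Lemma \ref{Lemma 2.6}, whose fourth-root bound yields $2^{(n-1)m/4}$ subsets of $\{1,\ldots,m\}\times\{1,\ldots,2^{n-1}\}$ and visibly matches the exponent $(n-1)m/6$, looks like the intended tool, but to exploit it one must restrict to set systems that are graphs of functions, and then the same counting problem reappears. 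As written, your proof is complete only for $n\ge 4$.
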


\begin{lemma}
\label{Lemma 2.6} Let $E$ be a set, let $v\in\mathbb{N}$ be such that
$64e^{3}v\leq\sharp E$ and put $\mathcal{L}(E,v)=\left\{  E_{1}\subset
E:\sharp E_{1}=v\right\}  .$ Then there is a set $\mathcal{L}(E,v,1/2)\subset
\mathcal{L}(E,v)$ with the following properties:

\noindent(i) for any distinct $E_{1},E_{2}\in\mathcal{L}(E,v,1/2),$%
\[
\sharp\left(  E_{1}\cap E_{2}\right)  \leq v/2;
\]

\noindent(ii)%
\[
\left(  \sharp\mathcal{L}(E,v,1/2)\right)  ^{4}\geq\sharp\mathcal{L}%
(E,v)=\binom{\sharp E}{v}.
\]

\end{lemma}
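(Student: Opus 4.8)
The plan is to construct the family $\mathcal{L}(E,v,1/2)$ greedily. I would first fix $N=\binom{\sharp E}{v}=\sharp\mathcal{L}(E,v)$ and estimate, for a fixed set $E_{0}\in\mathcal{L}(E,v)$, how many sets $E_{1}\in\mathcal{L}(E,v)$ fail the separation condition, i.e. satisfy $\sharp(E_{0}\cap E_{1})>v/2$. Such an $E_{1}$ is obtained by choosing $j>v/2$ elements inside $E_{0}$ and the remaining $v-j$ elements in $E\setminus E_{0}$; hence the number of ``bad'' partners of $E_{0}$ is
\[
M:=\sum_{j=[v/2]+1}^{v}\binom{v}{j}\binom{\sharp E-v}{v-j}.
\]
The key quantitative step is to show $M^{3}\le N$, or equivalently $M\le N^{1/3}$; this is precisely where the hypothesis $64e^{3}v\le\sharp E$ is used.

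To bound $M$ I would use Lemma \ref{Lemma 3.1}(i). Writing $n=\sharp E$, for each term with $v-j=:l\le v/2$ we have $\binom{v}{j}=\binom{v}{l}\le(ev/l)^{l}$ and $\binom{n-v}{l}\le(e(n-v)/l)^{l}\le(en/l)^{l}$, so each summand is at most $(e^{2}vn/l^{2})^{l}$, and crudely $M\le \tfrac{v}{2}\max_{1\le l\le v/2}(e^{2}vn/l^{2})^{l}$ (the $l=0$ term contributing $1$). On the other side, $N=\binom{n}{v}\ge (n/v)^{v}$. A direct comparison of $\log M$ and $\tfrac13\log N$: since $n\ge 64e^{3}v$ one checks that $e^{2}vn/l^{2}\le e^{2}vn \le n^{2}/(64e)$ for all relevant $l$, while $(n/v)^{v}\ge (64e^{3})^{v}$; feeding the bound $M\le \tfrac v2\,(n^{2})^{v/2}=\tfrac v2 n^{v}$ into the inequality $(\tfrac v2 n^{v})^{3}\le (n/v)^{v}$ fails for large $v$, so one must be more careful and keep the factor $l^{-2l}$: with $l\le v/2$ one has $(e^{2}vn/l^{2})^{l}\le \max(e^{2}vn,\,(2e)^{2}n/v\cdot v^{2}\cdot\ldots)$—the cleanest route is to note the summand is increasing in $l$ on $[1,\sqrt{e^{2}vn}]\supset[1,v/2]$ (using $n\ge v$), so $M\le 1+\tfrac v2(e^{2}n\cdot v/(v/2)^{2})^{v/2}=1+\tfrac v2(4e^{2}n/v)^{v/2}$, and then $M^{3}\le N$ reduces to $(\tfrac v2)^{3}(4e^{2}n/v)^{3v/2}\le (n/v)^{v}$, i.e. to $(4e^{2})^{3v/2}(\tfrac v2)^{3}\le (n/v)^{v/2}(v/n)^{v/2}\cdot(n/v)^{v}$—arranging the exponents gives the sufficient condition $n/v\ge (4e^{2})^{3}\cdot(\text{lower order})$, which is implied by $n\ge 64e^{3}v$ after absorbing the polynomial factor $(\tfrac v2)^{3}$ into a small exponential slack. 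I expect this arithmetic juggling of the constant $64e^{3}$ to be the main obstacle, and I would allow myself a slightly weaker separation bound ($v/2$ has some room) if needed.

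Granting $M^{3}\le N$, the greedy construction is routine: start with $\mathcal{L}(E,v,1/2)=\varnothing$, and repeatedly add to it any $E_{1}\in\mathcal{L}(E,v)$ that is not already ``spoiled'', i.e. not within intersection $>v/2$ of any previously chosen set. Each chosen set spoils at most $M$ members of $\mathcal{L}(E,v)$ (including itself), so the process runs until at least $N/M$ sets have been selected; thus $\sharp\mathcal{L}(E,v,1/2)\ge N/M\ge N/N^{1/3}=N^{2/3}$. This gives $(\sharp\mathcal{L}(E,v,1/2))^{4}\ge N^{8/3}\ge N$, so (ii) holds, and (i) holds by construction. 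The only point requiring care beyond the counting estimate is ensuring the greedy process is well-defined, which is immediate since at each stage the set of unspoiled members is finite and we simply pick one.

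Remark: an alternative to the greedy argument is a probabilistic deletion method—choose $t$ sets from $\mathcal{L}(E,v)$ independently and uniformly, delete one from each colliding pair—but the greedy version is cleaner here and interacts more transparently with the explicit constant, so that is the route I would present.
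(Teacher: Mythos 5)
The paper states Lemma \ref{Lemma 2.6} without proof (it is imported from \cite{EN} and \cite{K}), and your overall architecture --- greedy selection plus a count of the ``bad'' partners of a fixed $E_{0}$ --- is the standard and surely intended argument. However, your key quantitative claim $M^{3}\leq N$ is false, and the arithmetic you sketch for it cannot be repaired. Fix $v\geq 7$ and let $n=\sharp E\rightarrow\infty$: the sum $M=\sum_{j>v/2}\binom{v}{j}\binom{n-v}{v-j}$ is dominated by its term with the largest number $l=v-j$ of free elements outside $E_{0}$, namely $l=\lceil v/2\rceil-1$, so $M\sim c_{v}\,n^{\lceil v/2\rceil-1}$, while $N^{1/3}=\binom{n}{v}^{1/3}\sim c_{v}'\,n^{v/3}$; since $\lceil v/2\rceil-1>v/3$ for such $v$, one has $M>N^{1/3}$ for all large $n$, however large the ratio $n/v$. (In fact $M$ is of order roughly $N^{1/2}$ when $n/v$ is large.) The same defect is visible inside your own reduction ``$(\tfrac{v}{2})^{3}(4e^{2}n/v)^{3v/2}\leq(n/v)^{v}$'': the exponent of $n/v$ on the left, $3v/2$, exceeds the exponent $v$ on the right, so no hypothesis of the form $n/v\geq C$ can make it hold. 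You noticed that something ``fails for large $v$'' but did not resolve it, and the claimed sufficient condition $n/v\geq(4e^{2})^{3}\cdot(\text{lower order})$ is not a valid conclusion of that display.

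The repair is that you are aiming at more than you need: property (ii) only requires $\sharp\mathcal{L}(E,v,1/2)\geq N^{1/4}$, so the greedy step needs only $M\leq N^{3/4}$, which is true and provable cleanly. With $k=\lfloor v/2\rfloor+1>v/2$, a union bound over the $k$-subsets $S$ of $E_{0}$ gives
\[
\frac{M}{N}\leq\binom{v}{k}\frac{\binom{n-k}{v-k}}{\binom{n}{v}}\leq\binom{v}{k}\left(\frac{v}{n}\right)^{k}\leq\left(\frac{ev}{k}\cdot\frac{v}{n}\right)^{k}\leq\left(\frac{2ev}{n}\right)^{v/2},
\]
and since $n\geq4e^{3}v$ one has $(2ev/n)^{2}\leq v/(en)$, whence $M/N\leq(v/(en))^{v/4}\leq\binom{n}{v}^{-1/4}=N^{-1/4}$ by Lemma \ref{Lemma 3.1}(i). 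Your greedy process then yields $\sharp\mathcal{L}(E,v,1/2)\geq N/M\geq N^{1/4}$, which is exactly (ii), and (i) holds by construction; note that only the weaker hypothesis $4e^{3}v\leq\sharp E$ is used. So the strategy is correct, but the central inequality must be weakened to $M\leq N^{3/4}$ and the tail sum must be estimated more sharply than by the crude maximal-term bound you propose.
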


\section{ The main results}

\begin{theorem}
\label{Theorem 3.2} Let $0<p<q\leq\infty,$ set $\alpha=1/p-1/q$ and let
$m,n\in\mathbb{N},$ $2\leq n\leq m\leq2^{n}.$ Let $X,Y$ be $r-$normed
quasi-Banach spaces and suppose that $T_{0}\in B(X,Y).$ Let $T(m):l_{p}%
^{m}(X)\rightarrow l_{q}^{m}(Y)$ be the linear operator defined by
$T(m)(x)=\left(  T_{0}(x_{1}),...,T_{0}(x_{m})\right)  ,x=\left(
x_{1},...,x_{m}\right)  \in l_{p}^{m}(X).$ Then
\begin{equation}
c_{1}A(n,m,T_{0})\leq e_{n}(T(m))\leq c_{2}A(n,m,T_{0}).\label{Eq 3.1}%
\end{equation}
Here $c_{1},c_{2}$ are positive constants which depend on the parameters $p,q$
and $r$ only, and
\[
A(n,m,T_{0})=\max\left(  \left\Vert T_{0}\right\Vert \left(  \frac
{\log(m/n)+1}{n}\right)  ^{\alpha},\max_{k\in\{1,2,...,n\}}\left(
(k/n)^{\alpha}e_{k}(T_{0})\right)  \right)  .
\]

\end{theorem}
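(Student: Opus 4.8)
The plan is to prove the two inequalities in (\ref{Eq 3.1}) separately, using the preparatory lemmas of Section 2 as the engine for each direction.

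\textbf{Upper estimate.} First I would reduce to the case $\left\Vert T_0\right\Vert = A(n,m,T_0)$, i.e. normalise so that $A(n,m,T_0)=1$; by the defining formula this means $\left\Vert T_0\right\Vert \le (n/(\log(m/n)+1))^{\alpha}$ and $e_k(T_0)\le (n/k)^{\alpha}$ for all $k\in\{1,\dots,n\}$. The goal becomes $e_n(T(m))\le c_2$. The natural route is to split the $m$ coordinates into blocks of size roughly $n$ (there are about $m/n$ of them), handle each block of size $n$ by Lemma \ref{Lemma 2.3} — which gives $e_{5n}$ of the block operator bounded by $3^{1/q}$ once one has $e_s(T_0)\le (n/s)^{\alpha}$ — and then assemble the blocks using Lemma \ref{Lemma 2.4} together with the subadditivity of entropy numbers. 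More precisely, with $L=\lceil m/n\rceil$ blocks, Lemma \ref{Lemma 2.4} lets one distribute an entropy budget $n_1,\dots,n_L$ across blocks; giving each block about $5n$ steps plus the $\ell_q^L \hookrightarrow$ combinatorial cost, one needs $\sum(n_i-1)\le n-1$ roughly, which forces one to control the $\ell_q^L$ "profile". Here one invokes Schütt's theorem (Theorem \ref{Theorem 2.1}) for the scalar embedding $id:\ell_p^L\to\ell_q^L$ at level comparable to $n$: since $L\le m/n+1$ and $n$ is in the relevant range, $e_{cn}(id)\lesssim (\log(L)/n)^{\alpha}\cdot$(something), and combining the per-block bound $3^{1/q}$ with the outer $\ell_q$-structure gives $e_n(T(m))\lesssim 1$. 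The bookkeeping — splitting $n$ into the part spent on blocks, the part spent on the outer embedding, and the $\log$ factors from $\sharp\Gamma$ — is the fiddly part but is routine given the lemmas.

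\textbf{Lower estimate.} For the lower bound I would show separately that $e_n(T(m))\ge c_1\left\Vert T_0\right\Vert(\log(m/n)+1)^{\alpha}n^{-\alpha}$ and that $e_n(T(m))\ge c_1 (k/n)^{\alpha}e_k(T_0)$ for each fixed $k\in\{1,\dots,n\}$; the maximum of two lower bounds is a lower bound (up to adjusting $c_1$, or by noting $\max\le$ a constant times either one is false — so one really proves both and takes the larger, which dominates $\tfrac12$ of the max). The second bound is the heart: choose $k$ coordinates, restrict attention to a single block, and use Lemma \ref{Lemma 2.5} applied with $T_i=T_0$ and the inner entropy number $f_k(T_0)\ge c\,e_k(T_0)$ (via Pietsch's $f\asymp e$ relation). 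Lemma \ref{Lemma 2.5} then produces $f_{[(k-1)m/6]}(T)\ge 2^{-1/q}f_k(T_0)m^{-\alpha}$; rescaling the block count from $m$ down to about $n/k$ blocks (so that the index $[(k-1)(n/k)/6]\asymp n$) converts $m^{-\alpha}$ into $(n/k)^{-\alpha}=(k/n)^{\alpha}$ after dividing by the $\ell_p^{n/k}\to\ell_q^{n/k}$ normalisation, yielding $e_n(T(m))\gtrsim (k/n)^{\alpha}e_k(T_0)$. The first bound — the $\log(m/n)$ term — comes from the scalar Schütt lower estimate: take $T_0$ of norm $1$ restricted to a one-dimensional subspace along a fixed vector $x_0$ with $\left\Vert T_0 x_0\right\Vert\asymp\left\Vert T_0\right\Vert$, so that $T(m)$ contains an isometric copy (up to constants) of a scalar multiple of $id:\ell_p^m\to\ell_q^m$, and apply Theorem \ref{Theorem 2.1} in the regime $\log m\le n\le m$, giving $e_n\gtrsim \left\Vert T_0\right\Vert(\log(m/n+1)/n)^{\alpha}$.

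\textbf{Main obstacle.} The delicate point is the upper estimate, specifically synchronising three separate "costs": the $2^{5m/2}$-sized covering set $\Gamma(m)$ from Lemma \ref{Lemma 2.2} (which eats $\asymp m$ entropy steps — fine when $m\asymp n$ but must be handled blockwise when $m\gg n$), the per-block cost $5n$ from Lemma \ref{Lemma 2.3}, and the outer $\ell_q$-assembly cost from Lemma \ref{Lemma 2.4} plus scalar Schütt. One has to verify that the blocks can be chosen of size exactly $\asymp n$, that within each block the hypothesis $e_s(T_0)\le(n/s)^{\alpha}$ of Lemma \ref{Lemma 2.3} is met by the normalisation, and that the total index used is $\le c\,n$ for an absolute $c$ — then a final rescaling of $n$ by that constant (absorbing the change into $c_2$, legitimate since entropy numbers at comparable indices are comparable in this range, using $2\le n\le m\le 2^n$) closes the argument. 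I expect the constant-chasing across the block decomposition to be the only real work; everything else is an assembly of the quoted lemmas.
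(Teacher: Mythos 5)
Your lower estimate is essentially sound and close in spirit to the paper's: the term $\max_k (k/n)^{\alpha}e_k(T_0)$ is indeed obtained from Lemma \ref{Lemma 2.5} applied to $T(m')$ with $m'\asymp n/k$ copies and $b=f_k(T_0)$, using $f\asymp e$ and the fact that $T(m')$ sits isometrically inside $T(m)$; and reducing the $\left\Vert T_0\right\Vert\bigl(\frac{\log(m/n)+1}{n}\bigr)^{\alpha}$ term to the scalar Sch\"utt lower bound along a one-dimensional subspace is legitimate (the paper instead rederives this combinatorially via Lemma \ref{Lemma 2.6}, which is what makes its proof self-contained, but citing the known $0<p<q$ lower estimate is not circular).

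The upper estimate, however, has a genuine gap, and it is not mere bookkeeping. Your plan splits $\{1,\dots,m\}$ into $L\asymp m/n$ blocks of size $n$, spends $5n$ entropy steps per block via Lemma \ref{Lemma 2.3}, and assembles with Lemma \ref{Lemma 2.4}. But Lemma \ref{Lemma 2.4} requires the total budget $\sum_i(n_i-1)=n-1$, while your blocks demand $5nL\asymp 5m\gg n$ steps in total when $m\gg n$; giving each block its fair share $n/L=n^2/m$ steps yields only the trivial bound $e_1$ per block. Moreover Lemma \ref{Lemma 2.4} has domain $l_{\infty}^{L}$, so passing to it discards exactly the $\ell_p$-constraint on the outer index that produces the logarithmic gain, and there is no lemma in the paper (nor an obvious one) that lets you ``multiply'' the scalar Sch\"utt bound for $id:\ell_p^L\to\ell_q^L$ by per-block entropy bounds of nontrivial vector-valued operators. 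The mechanism the paper actually uses is different: choose $k\asymp n/\log(2m/n)$ so that $\binom{m}{k}\leq 2^{c_2 n}$ (Lemma \ref{Lemma 3.1}(ii)); for each $x\in B_p$ let $F$ be the set of its $k$ largest coordinates, so that by H\"older the tail satisfies $\left\Vert (T(m)-T(m)_F)(x)\right\Vert\leq\left\Vert T_0\right\Vert/(k+1)^{\alpha}$, which is precisely the first term of $A(n,m,T_0)$; then cover the head by taking the union over all $\binom{m}{k}$ choices of $F$ of nets of $T(m)_F(B_p)$, each such net being produced by Lemma \ref{Lemma 2.3} applied to the $k$-coordinate operator $T(k)$ at cost $5k\lesssim n$ steps. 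It is this ``$k$ largest coordinates plus binomial count'' device --- entirely absent from your sketch --- that reconciles the three costs you list; without it the block decomposition cannot be closed.
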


\begin{proof}
First note that given any $a>1,$ there are positive constants $C_{1}%
(a),C_{2}(a)$ such that, for any $m,n,\widetilde{m},\widetilde{n}\in
\mathbb{N}$ with $m\geq n,\widetilde{m}\geq\widetilde{n},ma\geq\widetilde
{m}\geq m/a,na\geq\widetilde{n}\geq n/a,$
\begin{equation}
C_{1}(a)A(\widetilde{n},\widetilde{m},T_{0})\geq A(n,m,T_{0})\geq
C_{2}A(\widetilde{n},\widetilde{m},T_{0}).\label{Eq 3.2}%
\end{equation}
Now we show that the required statement is a consequence of the following two assertions.

\noindent1. There are positive constants $C_{3},C_{4}$ and an integer $a>1$
such that, for any $m,n\in\mathbb{N}$ with $m\geq n,$ the following estimates
hold:%
\[
e_{na}\left(  T(m)\right)  \leq C_{3}A(n,m,T_{0}),\text{ }f_{n(a)}(T(m))\geq
C_{4}A(n,m,T_{0}).
\]
Here $n(a)$ is the smallest positive integer greater than or equal to $n/a.$

\noindent2. Given any integer $b\geq1$ (we need this assertion for $b=1$
only)$,$ there is a constant $C_{5}(b)=C_{5}$ such that for every
$n\in\mathbb{N},$%
\[
f_{nb}(T(n))\geq C_{5}A(n,n,T_{0}).
\]

Indeed let us prove that the first estimate in (\ref{Eq 3.1}) is a consequence
of the first estimate in assertion 1 and the estimates in (\ref{Eq 3.2}). Let
$\widetilde{m},\widetilde{n}\in\mathbb{N}$ with $\widetilde{m}\geq
\widetilde{n};$ without loss of generality we can suppose that $\widetilde
{n}\geq2a,a\geq2.$ Choose $n\in\mathbb{N}$ in such a way that $na\leq
\widetilde{n}\leq(n+1)a.$ Then%
\begin{align*}
e_{\widetilde{n}}(T(\widetilde{m})  & \leq e_{na}(T(\widetilde{m})\leq
C_{3}A(n,\widetilde{m},T_{0})\leq C_{1}C_{3}A(na,\widetilde{m},T_{0})\\
& \leq C_{1}^{2}C_{3}A(\widetilde{n},\widetilde{m},T_{0}).
\end{align*}
\ To prove the second estimate in (\ref{Eq 3.1}), once more let $\widetilde
{m},\widetilde{n}\in\mathbb{N}$ with $\widetilde{m}\geq\widetilde{n}.$ Choose
$n\in\mathbb{N}$ in such a way that $n(a)\geq\widetilde{n}\geq n(a)-1;$
without loss of generality we can suppose that $\widetilde{n}\geq2a,a\geq2.$
There are two possibilities: $n\leq\widetilde{m}$ or  $n\geq\widetilde{m}.$ In
the first case we use the estimates%
\[
f_{\widetilde{n}}(T(\widetilde{m}))\geq f_{n(a)}(T(\widetilde{m}))\geq
C_{4}A(n,\widetilde{m},T_{0})\geq C_{2}C_{4}A(\widetilde{n},\widetilde
{m},T_{0}).
\]
In the second case we use the estimate $f_{n}(T(\widetilde{m}))\geq
f_{\widetilde{m}}(T(\widetilde{m})),$ and then assertion 2 with $b=1$ and the
estimate (\ref{Eq 3.1}).

Now we prove assertions 1 and 2. We begin with the proof of the upper estimate
in statement 1 and let $k$ be the positive integer defined in Lemma
\ref{Lemma 3.1} (ii). For any set $F\subset\{1,2,...,m\}$ such that $\sharp
F=k$ let $T(m)_{F}:$ $l_{p}^{m}(X)\rightarrow l_{q}^{m}(Y)$ be the linear
operator defined by%
\[
T(m)_{F}(x)=\left(  \chi_{F}(1)T_{0}(x_{1}),...,\chi_{F}(m)T_{0}%
(x_{m})\right)  ,\text{ }x=(x_{1},...,x_{m})\in l_{p}^{m}(X).
\]
Here $\chi_{F}$ is the characteristic function of $F.$ Let $s\in
\mathbb{N},\varepsilon>0$ and $\eta=e_{s}(T(m));$ let $B_{p}$ be the unit ball
in $l_{p}^{m}(X)$ and denote by $\Gamma(F)$ an $(\eta+\varepsilon)-$net (of
cardinality $2^{s-1}$) of $T(m)_{F}(B_{p})\subset$ $l_{q}^{m}(Y)$ such that
$y_{i}=0$ for any $i\in\{1,2,...,m\}\backslash F$ and $y=(y_{1},...,y_{m}%
)\in\Gamma(F).$ Let $\Gamma=\cup\Gamma(F),$ where the union is taken over all
sets $F\subset\{1,2,...,m\}$ with $\sharp F=k.$ Then
\[
\sharp \Gamma(F)\leq2^{s-1}\binom{m}{k}.
\]
Much as in \cite{EN1} (see the proof of Lemma 11) it can be seen that $\Gamma$
is an $\varepsilon_{0}-$net of $T(m)(B_{p})$ in $l_{q}^{m}(Y),$ where
\[
\varepsilon_{0}^{r}=(\eta+\varepsilon)^{r}+\left(  \left\Vert T_{0}\right\Vert
/(k+1)^{\alpha}\right)  ^{r}.
\]

Now let $x=(x_{1},...,x_{m})\in B_{p}$ and let $F$ be any subset of
$\{1,2,...,m\}$ such that $\sharp F=k$ and $\left\Vert x_{i}\right\Vert
_{X}\geq\left\Vert x_{j}\right\Vert _{X}$ whenever $i\in F$ and $j\in$
$\{1,2,...,m\}\backslash F.$ Then $\left\Vert T_{0}x_{j}\right\Vert _{Y}%
\leq\left\Vert T_{0}\right\Vert /(k+1)^{1/p}$ if $j\in$
$\{1,2,...,m\}\backslash F.$ By H\"{o}lder's inequality,%
\[
\left\Vert \left(  T(m)-T(m)_{F}\right)  (x)\right\Vert _{l_{q}^{m}(Y)}%
\leq\left\Vert T_{0}\right\Vert /(k+1)^{\alpha}.
\]
In view of Lemma \ref{Lemma 3.1} these arguments imply that there is a
positive integer $C_{6}$ such that, for any $s\in\mathbb{N}$,%
\[
e_{C_{6}n+s}(T(m))\leq2^{1/q}\max\left(  e_{s}(T(k),\left\Vert T_{0}%
\right\Vert /(k+1)^{\alpha}\right)  .
\]
Together with Lemma \ref{Lemma 2.3} this gives the required upper estimate in
statement 1. The lower estimate is a consequence of Lemma \ref{Lemma 2.5}. \

To prove statement 2, note that because of Lemma \ref{Lemma 2.5} and the
estimates of $A(n,m,T_{0})$ given in (\ref{Eq 3.1}), it is enough to show that
given any $b\in\mathbb{N},$ there is a positive constant $C_{7}(b)=C_{7}$ such
that for every $n\in\mathbb{N}$,%
\[
f_{nb}(T(n))\geq C_{7}\left\Vert T_{0}\right\Vert /n^{\alpha}.
\]
Let $n,u\in\mathbb{N}$ with $n>64e^{3},$ put $E=\{1,2,...,n\},$ suppose that
$v$ is the largest integer such that $64e^{3}v\leq n,$ and let $x\in X$
satisfy $\left\Vert x\right\Vert _{X}\leq1$ and $\left\Vert T_{0}x\right\Vert
_{Y}\geq\left\Vert T_{0}\right\Vert /2.$ Define $I(u)$ to be the subset of the
unit ball of $l_{p}^{m}(X)$ consisting of all points with $i^{th}$ coordinate
$(1\leq i\leq m)$
\[
\sum\limits_{j=1}^{u}2^{-kr}v^{-1/p}\chi_{E(j)}(i)x\text{ for some }%
E(j)\in\mathcal{L}(E,v,1/2).
\]
Then $\sharp I(u)\geq2^{C_{8}n}$ and $\left\Vert T(m)x-T(m)y\right\Vert
_{l_{q}^{m}(Y)}\geq2^{-ru}v^{-\alpha}C_{9}$ for all distinct $x,y\in I(u).$
The result follows.
\end{proof}

To conclude we formulate one more result, the proof\ of which is similar to
that of the last theorem.

\begin{theorem}
\label{Theorem 3.3} Let $0<p<q\leq\infty,$ set $\alpha=1/p-1/q$ and let
$m,n\in\mathbb{N},$ $m\leq2^{n}.$ For each $i\in\{1,2,...,m\}$ let
$X_{i},Y_{i}$ be $r-$normed quasi-Banach spaces and suppose that $T_{i}\in
B(X_{i},Y_{i}).$ Let $T:l_{p}^{m}(X_{i})\rightarrow l_{q}^{m}(Y_{s})$
be the linear operator defined by
\[
T(x)=\left(  T_{1}(x_{1}),...,T_{m}(x_{m})\right)  x=(x_{1}%
,...,x_{m})\in l_{p}^{m}(X_{i}).
\]
(i) Let $m\geq2n$ and suppose that $\left\Vert T_{1}\right\Vert \geq\left\Vert
T_{2}\right\Vert \geq...\geq\left\Vert T_{m}\right\Vert ,$ $\left\Vert
T_{1}\right\Vert \leq2\left\Vert T_{n}\right\Vert ;$ put%
\begin{align*}
A(n,m)  &  =\max_{s\in\{n,n+1,...,m\},s\leq 2^n}\left\Vert T_{s}\right\Vert \left(
\frac{\log(2s/n)}{n}\right)  ^{\alpha},\\
B(n,m)  &  =\max_{k\in\{1,2,...,n\},\text{ }i\in\{1,2,...,m\}}\left(
(k/n)^{\alpha}e_{k}\left(  T_{i}\right)  \right)  .
\end{align*}
Then
\[
c_{1}A(n,m)\leq e_{n}(T)\leq c_{2}\max(A(n,m),B(n,m)),
\]
where $c_{1},c_{2}$ are positive constants which depend on the parameters $p$
and $q$ only.

\noindent(ii) Suppose that $m\leq n$ and $T_{1}=T_{2}=...=T_{m}=T_{0}.$ For
any $a>0$ let
\[
D(a,n,m)=\max_{k\in\{1,2,...,n\},\text{ }k\geq a}\left(  (k/n)^{\alpha}%
e_{k}\left(  T_{0}\right)  \right)  .
\]
Then%
\[
c_{5}D(c_{3}n/m,n,m)\leq e_{n}(T)\leq c_{6}D(c_{4}n/m,n,m),
\]
where $c_{3},c_{4}$ are absolute constants and the constants $c_{5},c_{6}$
depend on the parameters $p$ and $q$ only.
\end{theorem}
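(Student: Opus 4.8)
The plan is to mirror the architecture of the proof of Theorem \ref{Theorem 3.2}, reducing the claim to upper and lower estimates that are stable under the bounded perturbations of $n$ and $m$ appearing in the $A$- and $D$-functions, and then handling the two cases separately. Throughout I would freely use the submultiplicativity and additivity of entropy numbers recorded in Section 2, the relation $f_n \asymp e_n$ (with constants depending on $r$), Lemma \ref{Lemma 3.1}, and Lemmas \ref{Lemma 2.4}, \ref{Lemma 2.5}, \ref{Lemma 2.6}.

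For part (i), the upper estimate follows the same combinatorial covering scheme as in Theorem \ref{Theorem 3.2}: one fixes $k$ as in Lemma \ref{Lemma 3.1}(ii), restricts $T$ to the $k$ coordinates carrying the largest norms $\Vert T_i\Vert$ (call this $T_F$), covers the tail $T - T_F$ in $l_q^m$ using H\"older's inequality together with the monotonicity $\Vert T_1\Vert\geq\cdots\geq\Vert T_m\Vert$ and the near-constancy constraint $\Vert T_1\Vert\leq 2\Vert T_n\Vert$ (which is what lets $\Vert T_s\Vert(\log(2s/n)/n)^\alpha$ absorb the error term $\Vert T_F\Vert/(k+1)^\alpha$), and then covers the $k$-dimensional block by distributing the $\binom{m}{k}$ choices of $F$ over an $e_s$-net and invoking Lemma \ref{Lemma 2.3} (after rescaling so that the hypothesis $e_s(T_i)\leq (m/s)^\alpha$ of that lemma holds up to the constant $A(n,m)$); the term $B(n,m)$ arises exactly as $\max_k(k/n)^\alpha e_k(T_i)$ does in Theorem \ref{Theorem 3.2}, from the inner $e_s(T_F)$-net. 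For the lower estimate one picks the index $s^\ast$ realizing $A(n,m)$, uses $f_{s^\ast}$ applied to $T_{s^\ast}$ combined with Lemma \ref{Lemma 2.6} to build a large $2^{-\text{separated}}$ packing in the $s^\ast$ relevant coordinates (as in the proof of statement 2 of Theorem \ref{Theorem 3.2}), and reads off $f_n(T)\gtrsim A(n,m)$ via Lemma \ref{Lemma 2.5}; passing from $f_n$ to $e_n$ costs only the $r$-dependent constant.

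For part (ii), where $m\leq n$ and all $T_i=T_0$, the point is that $T$ now has much more room: $n$ entropy ``budget'' is spread over only $m$ blocks, so roughly $n/m$ is available per block. The upper estimate comes from choosing integers $n_1,\dots,n_m$ with $\sum(n_i-1)=n-1$ and each $n_i\asymp n/m$, applying Lemma \ref{Lemma 2.4} (via the factorization $l_p^m(X)\hookrightarrow l_\infty^m(X)\to l_q^m(Y)$, absorbing the first embedding's norm), and bounding each $e_{n_i}(T_0)$ by $D(c_4 n/m,n,m)$ after rescaling $k\mapsto kn/(n_i n)\asymp k$; the restriction $k\geq c_4 n/m$ is precisely the statement that entropy numbers of $T_0$ below level $n/m$ are irrelevant here. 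The lower estimate picks $k^\ast\geq c_3 n/m$ realizing $D$, applies $f_{k^\ast}$ to $T_0$, and tensors this packing across the $m$ blocks using Lemma \ref{Lemma 2.6} and Lemma \ref{Lemma 2.5} to get $f_n(T)\gtrsim (k^\ast/n)^\alpha e_{k^\ast}(T_0)$.

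The main obstacle, as in Theorem \ref{Theorem 3.2}, is bookkeeping the index shifts so that the constants $c_3,c_4$ (resp. the constant $a$ in part (i)) can be chosen \emph{absolute} (resp. depending only on $p,q$) rather than drifting with $m,n$: this requires an analogue of the stability estimate \eqref{Eq 3.2} showing $D(c_3 n/m,n,m)\asymp D(c_4 n/m,n,m)$ up to $(p,q)$-constants whenever $c_3\asymp c_4$, which in turn rests on the fact that $(k/n)^\alpha e_k(T_0)$ changes by a bounded factor when $k$ is moved by a bounded multiplicative amount — a consequence of the submultiplicativity of $e_k(T_0)$. I would prove that elementary stability lemma first, then the upper bounds, then the lower bounds, exactly paralleling the earlier argument.
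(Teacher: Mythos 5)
The paper offers no proof of this theorem beyond the remark that it is ``similar to that of the last theorem'', so your proposal must be judged against that intended adaptation of Theorem \ref{Theorem 3.2}. Most of your outline is in the right spirit, but the upper estimate in part (ii) as you describe it fails. Factorizing through $l_{p}^{m}(X)\hookrightarrow l_{\infty}^{m}(X)$ and applying Lemma \ref{Lemma 2.4} with $n_{i}\asymp n/m$ gives $e_{n}(T)\leq\left(\sum_{i=1}^{m}e_{n_{i}}^{q}(T_{0})\right)^{1/q}\asymp m^{1/q}e_{[n/m]}(T_{0})$, whereas the term of $D(c_{4}n/m,n,m)$ with $k\asymp n/m$ is $(k/n)^{\alpha}e_{k}(T_{0})\asymp m^{-\alpha}e_{[n/m]}(T_{0})$; when $e_{k}(T_{0})$ decays quickly the maximum is attained at this smallest admissible $k$, and your bound then overshoots by the factor $m^{1/q+\alpha}=m^{1/p}$. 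The restriction $k\geq c_{4}n/m$ together with the weight $(k/n)^{\alpha}$ exists precisely because one must exploit, not discard, the $l_{p}$-constraint across the blocks. The intended mechanism is that of Lemma \ref{Lemma 2.3}: cover $B_{l_{p}^{m}(X)}$ by the at most $2^{5m/2}\leq2^{5n/2}$ products $\prod_{i}\varepsilon_{i}^{1/p}B_{X}$ supplied by Lemma \ref{Lemma 2.2}, and inside each product apply the Lemma \ref{Lemma 2.4}-type estimate with $k_{i}=n\varepsilon_{i}\geq n/m$, so that $e_{k_{i}}(\varepsilon_{i}^{1/p}T_{0})\leq\varepsilon_{i}^{1/p}(n/k_{i})^{\alpha}D=\varepsilon_{i}^{1/q}D$ and $\sum_{i}\varepsilon_{i}\leq3$ closes the estimate; this is where the lower cutoff $\asymp n/m$ genuinely comes from.

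Two further points. In part (i) you fix $F$ as the $k$ coordinates carrying the largest $\Vert T_{i}\Vert$; in the argument of Theorem \ref{Theorem 3.2} the set $F$ must vary with the point $x$ (the coordinates where $\Vert x_{i}\Vert_{X_{i}}$, or rather $\Vert T_{i}x_{i}\Vert_{Y_{i}}$, is largest) --- that is the only reason the remainder $(T-T_{F})(x)$ is small via H\"{o}lder, and the only reason the $\binom{m}{k}$ count enters; a single fixed $F$ gives $\Vert T-T_{F}\Vert\geq\Vert T_{k+1}\Vert$, which is not small. In part (ii)'s lower bound, applying Lemma \ref{Lemma 2.5} across all $m$ blocks yields only $f_{n}(T)\gtrsim m^{-\alpha}e_{k^{\ast}}(T_{0})$, which matches the target $(k^{\ast}/n)^{\alpha}e_{k^{\ast}}(T_{0})$ only when $k^{\ast}\asymp n/m$; for larger $k^{\ast}$ you must first restrict $T$ to roughly $n/k^{\ast}$ coordinates and apply Lemma \ref{Lemma 2.5} there. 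Your closing stability observation (that $D$ changes only by bounded factors under bounded changes of the cutoff) is correct and is indeed a needed ingredient.
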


\

\bigskip

\bigskip D. E. Edmunds, Department of Mathematics, University of Sussex,
Brighton BN1 9QH, U.K.

Yu. Netrusov, Department of Mathematics, University of Bristol, Bristol BS8
1TW, U.K.

\
\end{document}